\documentclass[10pt]{amsart}   	
\usepackage{geometry}                		
\geometry{a4paper}                 		
\usepackage[parfill]{parskip}    		
\usepackage{graphicx}				
\usepackage{mathrsfs}  
					
\usepackage{amsfonts,amsthm}

\usepackage{amstext, xcolor} 

\usepackage{mathtools}
\usepackage{array}								
\usepackage{amssymb}

\usepackage{amsmath}

\usepackage{cite}

\newcommand*{\f}{A}%
\newcommand*{\g}{B}%

\newtheorem*{rep@theorem}{\rep@title}
\newcommand{\newreptheorem}[2]{%
\newenvironment{rep#1}[1]{%
 \def\rep@title{#2 \ref{##1}}%
 \begin{rep@theorem}}%
 {\end{rep@theorem}}}
\makeatother

\newtheorem{definition}{Definition}[section]
\newtheorem{theorem}{Theorem}[section]
\newtheorem{corollary}{Corollary}[theorem]
\newtheorem{lemma}[theorem]{Lemma}
\newtheorem{proposition}{Proposition}[theorem]
\newreptheorem{theorem}{Theorem}
\newtheorem*{remark}{Remark}

\begin{document}

\title[Quasi-isometric embeddings from $T_n$ to $T$]{%
	Quasi-isometric embeddings from the generalised Thompson's group to Thompson's groups $T$
}
\author{%
	Xiaobing Sheng
} 
\address{%
	Graduate School of Mathematical Sciences, 
	The University of Tokyo
} 
\email{%
	xiaobing@ms.u-tokyo.ac.jp 
} 
\subjclass[2010]{%
	Primary 20F65, Secondary 20F05, 37E05
}

\keywords{%
	Thompson's group, quasi-isometric embedding
}

\date{\today} 

\begin{abstract} 
Brown has defined the generalised Thompson's group  $F_n$,  $T_n$, $V_n$
where $n$ is an integer at least  $2$  and Thompson's groups  $F= F_2$, $T =T_2$, $V = V_2$  in the 80's. 
Burillo, Cleary and Stein have found that there is a quasi-isometric embedding from $F_n$ to $F_m$ 
where $n$ and $m$ are positive integers at least 2.  
We show that there is a quasi-isometric embedding from $T_n$ to $T_2$  for any  $n \geq 2$   
and no embeddings from  $T_2$  to  $T_n$  for  $n \geq 3$.  
\end{abstract}

\maketitle

\section{Introduction}

Thompson's groups, $F, \, T$ and $V$
are some of the most mysterious groups being investigated in the last half century, 
and were first introduced by Richard Thompson 
for finding out finitely presented groups with unsolvable word problem. 
These groups were later found to have connections 
to string rewrite systems, combinatorics, homotopy theory and large scale geometry, etc \cite{JSMJ09,JMB04,VGMS97}. 
The groups can be considered 
as subgroups of the automorphism group of the Cantor set. 
There are purely algebraic interpretations of $F$ $T$ as finitely presented groups \cite{JWW},
\begin{equation} 
\label{pre1.1}
	F =\big< A, B \mid [AB^{-1}, A^{-1}BA], [AB^{-1}, A^{-2}BA^2] \big>  
\end{equation} 
where $[x,y] = xyx^{-1}y^{-1}$, 
and 
\begin{equation}
\label{pre1.2}
\begin{aligned} 
	T = \big< A, B, C\mid [AB^{-1}, A^{-1}BA], [AB^{-1}, A^{-2}BA^2], C^{-1}B(A^{-1}CB), \\
	((A^{-1}CB)(A^{-1}BA))^{-1}B(A^{-2}CB^{-2}), (CA)^{-1}(A^{-1}CB)^{2}, C^{3}
\big>.   
\end{aligned} 
\end{equation}

The groups have been generalised by Brown \cite{BKS87} in the late 80's 
to the families $F_n, T_n$ and $V_n$, where $n$ is an integer greater than or equal to $2$, 
which are the groups we are interested in, and where $F = F_2$, $T =T_2$ . 
Stein later on further generalised the groups into Thompson-Stein groups \cite{JWW,IL}. 
A comprehensive introduction is given in the introductory notes by Cannon, Floyd and Parry \cite{JWW}. 

Burillo \cite{J} first studied undistorted subgroups of $F$.
Burillo, Cleary and Stein \cite{JSM01} give a detailed construction of embeddings 
from $F_n$ into $F_m$ for any natural numbers $n, m \geq 2$.  
An estimate on the word length of the elements in the generalised Thompson's group $F_n$ is 
given through its unique normal form and it serves as an ingredient 
for proving that the embedding from $F_n$ into $F_m$ is quasi-isometric.

Liousse \cite{IL} has investigated the rotation numbers for Thompson-Stein groups 
and raised the question: 
Is it possible to describe all Thompson-Stein groups up to isomorphism, 
up to quasi-isomorphism?
She proves that selected Thompson-Stein groups are not isomorphic to each other. 

In this paper, we focus on Thompson's group $T_2$ and their generalisation $T_n$ \cite{BKS87}.  
We prove that there is a quasi-isometric embedding from $T_n $ to $T_2 $ for 
any integer $n \geq 2$ and there is no embedding from $T_2 $ to $T_n $ where $n \geq 3$.
 

\section{Background}
\subsection{Thompson's group  $F$  and its generalisations $F_n$}

We inherit the interpretations from  \cite{JMB04} and \cite{JWW}. 
Consider the interval  $[0, 1]$ with finitely many distinguished points at dyadic rationals 
and we have the following.

\begin{definition}
Let the interval  $[0,1]$  be divided so that the subintervals are only 
in the form  $\big[ \frac{\ell}{2^k}, \frac{\ell+1}{2^k}\big]$  where  $\ell+1< 2^k$  
and  $k,\ell \in \mathbb{N}\cup \{0\}$.  
When the interval  $[0,1]$  is divided in such a way and the number of subintervals are finite, 
then the division is called a dyadic subdivision and the interval  $[0,1]$  with a dyadic subdivision 
is called a dyadically subdivided interval.
\end{definition}

\begin{definition}[\cite{JWW}]  
$F$  is defined as the group of orientation-preserving 
piecewise-linear homeomorphisms from the unit interval  $[0,1]$ to itself 
which map dyadically subdivided intervals to dyadically subdivided intervals. 
\end{definition}


The following piecewise-linear functions are given as a finite list of generators of  
$F$ 

\begin{gather*}
\f(t)=\left\{\begin{array}{lll}
               \frac{1}{2}t  & \textnormal{if} &  0\leq t\leq \frac{1}{2} \\
                t-\frac{1}{4}  &  \textnormal{if} & \frac{1}{2} \leq t \leq \frac{3}{4} \\ 
                              2t -1 & \textnormal{if} &\frac{3}{4} \leq t \leq 1
            \end{array}\right.
\quad
\g(t)=\left\{\begin{array}{lll}
            t& \text{if }  0\leq t\leq \frac{1}{2} \\
             \frac{1}{2}t +\frac{1}{4} & \textnormal{if}  \frac{1}{2} \leq t \leq \frac{3}{4} \\ 
                            t-\frac{1}{8} & \textnormal{if}  \frac{3}{4} \leq t \leq \frac{7}{8} \\
                2t -1 & \textnormal{if} \frac{7}{8} \leq t \leq 1
            \end{array}\right. 
\end{gather*}

A generalised version of $F$ can be defined as follows,

\begin{definition}\label{def1}  
$F_n$  is defined as the group of orientation-preserving piecewise-linear homeomorphisms 
from the n-adically subdivided intervals to the n-adically subdivided intervals.
\end{definition}

\subsubsection{An alternative view on $F$}

To see the elements of  $F$  from another point of view, 
we first consider binary trees.  
Define a 2-caret to be a vertex with two edges attached to it.

%
%
%

Then, we can relate a dyadically subdivided interval $[0,1]$ to a binary tree 
by associating a subinterval with a breakpoint in the middle to a 2-caret. 
A binary tree can be expanded simply by attaching 2-carets at some leaves and reduced by deleting 2-carets. 

For two binary trees with the same number of leaves, 
we label the leaves of each tree from the left to right by  $0, 1, 2, 3, \cdots, n$, 
and denote these two trees by  $\mathcal{U}$  and  $\mathcal{V}$,  
respectively, and denote the pair by  $(\mathcal{U}, \mathcal{V})$.  
All such pairs form a set  $\mathscr{T}$, 
i.e.  
$$\mathscr{T} = \{(\mathcal{U}, \mathcal{V}) \mid  \mathcal{U}, \mathcal{V} \; 
\mbox{are trees which have the same number of leaves}  \}.$$  
A tree pair as described above can be associated with some piecewise-linear functions 
mapping $[0,1]$  to itself and we call the former tree the ``source tree" and the later tree the ``target tree".

For a tree pair  $(\mathcal{U}, \mathcal{V})$, 
when we attach the roots of two 2-carets to each of the leaves on  
$\mathcal{U}$  and  $\mathcal{V}$  with the same numeric labeling, 
the corresponding piecewise-linear function is preserved and 
we call this procedure ``simple expansion".  
Similarly, 
when we delete two 2-carets with the same labeling at its leaves on each tree,  
the resulting tree pair still represents the same piecewise-linear function, 
``simple contraction".

Note that ``simple expansion" of a tree pair here is different from the expansion of a single tree that we described above.
A tree pair is reduced, if one can apply no more simple contractions on it.

We define an equivalence relation  $\sim$  on the set of tree pairs $\mathscr{T}$ as follows. 
Let  $(\mathcal{T}, \mathcal{S})$  and  $(\mathcal{T}', \mathcal{S}') \in \mathscr{T}$,  
we say that  $(\mathcal{T}, \mathcal{S}) \sim (\mathcal{T}', \mathcal{S}')$,  
if one can be obtained from the other by applying finite number of simple 
expansions and contractions.  
We hence denote the equivalence class of  
$(\mathcal{T}, \mathcal{S})$  by  $[(\mathcal{T}, \mathcal{S})]$.  
It is proved in \cite{JSM01} that every element of  $F$  is represented 
by a unique reduced tree pair.


\subsubsection{Binary operation}

Let  $(\mathcal{T}, \mathcal{S})$  and  $(\mathcal{U}, \mathcal{V})$  be 
two tree pairs in  $\mathscr{T}$.  
By applying simple expansions (or simple contractions) to $(\mathcal{T}$, $\mathcal{S})$  and  
$(\mathcal{U}$, $\mathcal{V})$ so that the trees  $\mathcal{S}$  and  $\mathcal{U}$  
become identical trees, 
say $\mathcal{R}$, 
we obtain the product of the two tree pairs, the pair  $(\mathcal{W}, \mathcal{Y})$, 
as the result.

$(\mathcal{W}, \mathcal{Y})$  is again a tree pair with the same number of leaves.  
This operation induces the multiplication  $\ast$  on the equivalence class  $\mathscr{T}/\sim$. 
One can check that  $\mathscr{T}/\sim$  with  $\ast$  forms a group.

Similarly, 
the elements in  $F_n$  can be represented as equivalence classes of 
some $n$-ary tree pairs whose source and target tree have the same number of leaves.

\subsection{Thompson's group $T$ and its generalisation $T_n$}

$T$,  
known as a subgroup of the group homeomorphisms of the unit interval 
with two end points identified  $([0, 1] /\sim \, = S^1)$  is defined as follows.

\begin{definition} 
We call a dyadically subdivided interval  $[0,1]$  with the end point identified, 
a dyadically subdivided circle $S^1$, or a circle with dyadic subdivision.
\end{definition}


\begin{definition}
$T$  is defined as the group of orientation-preserving piecewise-linear homeomorphisms 
of $S^1$ which take a dyadically subdivided  $S^1$  to a dyadically subdivided  $S^1$ using affine maps on each subinterval.
\end{definition} 
The element expressed as a piecewise-linear function below is a torsion element in $T$.  
Together with  $A(t)$  and  $B(t)$  in  $F$, (Section 2),  
they generate $T$ \cite{JWW}.

{
 \abovedisplayskip=0pt\relax
\[
C(t) =
\left\{
\!
\begin{array}{lr}
	t+\frac{1}{2} & \text{ if }  0\leq t\leq \frac{1}{2} \\
	&\\
	t - \frac{1}{2} & \text{ if }  \frac{1}{2} \leq t \leq 1.   
\end{array}
\right.
\]}

Since the elements in  $T$  are homeomorphisms of  $S^1$  rather than the unit interval, 
we express these elements as binary tree pairs with ``cyclic" labeling.

We explain what ``cyclic" labeling means below.  
For a binary tree we pick some leaf and label it by  $0$, 
then the leaves following it on the right are labeled by non negative integers  $ 1, \cdots, k$  in order 
until the rightmost one.  
If all the leaves are labelled, 
then we are done.  
The label is exactly as in a tree of the tree pairs representing  $F$.  
If there are still leaves which are not labelled, 
these leaves are the ones on the left of the leaf labelled  $0$. 
We label these leaves from the leftmost by  $k+1$  onwards until reaching the leaf 
on the left of the leaf labeled by  $0$. 

Consider the set of binary tree pairs, the binary tree pairs  $(\mathcal{U}, \mathcal{V})$  
being labelled cyclically forms a set.  
By defining the equivalence relation and binary operation as in the case of $F$,  
we obtain a group isomorphic to $T$.

Every element of  $T$  as a tree pair also has a unique reduced tree pair.  
The uniqueness of tree pair representing elements in  $F$  first proved 
in \cite{JWW} and the analogue in  $T$  is explained in details in \cite{JSMJ09}. 

Analogously, the generalised groups  $T_n$ can be defined as follows, 
\begin{definition}
$T_n$  is defined as the group of orientation-preserving piecewise-linear homeomorphisms 
of  $S^1$  which take an $n$-adically subdivided $S^1$ to an $n$-adically subdivided $S^1$  
using affine maps on each subinterval.
\end{definition}

In the case of  $T_n$, 
we have elements which map subintervals on the $n$-adic subdivision of  $S^1$  to 
subintervals on the other $n$-adic subdivision of  $S^1$  with the same number of breakpoints, 
hence, the trees in tree pairs are $n$-ary trees. 
The uniqueness of the reduced tree pairs in  $T_n$  can be proved as in the case of  $T$.


\subsection{Known results}

In \cite{JSM01}, 
Burillo, 
Cleary and Stein have proved that for  $F_p$  and  $F_q$  
where  $p$  and  $q$ are positive integers at least $2$, 
there is a quasi-isometric embedding from one to the other defined by 
the ``caret-replacement map''.  
Here $q$-carets in $q$-ary tree pairs representing elements in  $F_q$  are 
replaced by a possible arrangement of $p$-carets 
to form $p$-ary tree pairs representing elements in  $F_p$. 
For the later argument, 
we give a proof of the special case which resembles the one of \cite[Subsection 5.3]{GGMS2}.  

\begin{lemma}
There is an embedding from $F_n$ to $F_2 = F.$
\label{lemma1}
\end{lemma}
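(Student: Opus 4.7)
The plan is to mimic the caret-replacement construction from \cite{JSM01} in the special case where the target group is $F_2 = F$. First I would fix a distinguished binary tree $\Lambda_n$ with $n$ leaves, for concreteness the right-comb with $n-1$ stacked 2-carets, together with a left-to-right identification of its leaves with the leaves of a single $n$-caret. This fixes a tree-level substitution map $\Phi$ which sends an $n$-ary tree to a binary tree by replacing every $n$-caret with a copy of $\Lambda_n$. The candidate embedding is then $\phi \colon F_n \to F$ defined on tree-pair equivalence classes by $\phi([(\mathcal{T}, \mathcal{S})]) = [(\Phi(\mathcal{T}), \Phi(\mathcal{S}))]$.

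The routine part is then checking that $\phi$ is a well-defined homomorphism. For well-definedness, an $n$-ary simple expansion at a leaf $i$ is sent by $\Phi$ to grafting $\Lambda_n$ at leaf $i$ in both components, which is a composite of $n-1$ ordinary binary simple expansions and so leaves the binary tree-pair equivalence class unchanged. Multiplicativity then follows from the fact that $\Phi$ intertwines $n$-ary common refinements with binary ones, so the product $(\mathcal{T}_1, \mathcal{S}_1) \ast (\mathcal{T}_2, \mathcal{S}_2)$ in $F_n$ --- computed by passing to a common refinement of $\mathcal{S}_1$ and $\mathcal{T}_2$ --- is sent to the product of the images in $F$ computed in the analogous way.

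The hard step is injectivity. Given a reduced non-trivial representative $(\mathcal{T}, \mathcal{S})$ in $F_n$, I would argue that $(\Phi(\mathcal{T}), \Phi(\mathcal{S}))$ is itself already a reduced binary tree pair. A binary simple contraction would require a pair of 2-carets, one in each tree, at matching leaf positions, both having their children as actual leaves of the ambient tree. Because $\Lambda_n$ is a right comb, the only 2-caret in a $\Lambda_n$-block whose two children are leaves of the block is the bottom-most one, and it corresponds to the two right-most children of the originating $n$-caret --- and even then only if those children carry no further $n$-carets. A matching contraction at leaf positions $(i, i+1)$ in $(\Phi(\mathcal{T}), \Phi(\mathcal{S}))$ therefore forces the two $\Lambda_n$-blocks to start at the same leaf, so $\mathcal{T}$ and $\mathcal{S}$ must share a bottom $n$-caret at the same leaf, contradicting reducedness of $(\mathcal{T}, \mathcal{S})$ in $F_n$. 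Since $\mathcal{T} \neq \mathcal{S}$ forces $\Phi(\mathcal{T}) \neq \Phi(\mathcal{S})$, the image tree pair is reduced and non-trivial, so $\phi(g) \neq e$ in $F$. The main obstacle is keeping the leaf-position bookkeeping straight through $\Phi$; the right-comb shape of $\Lambda_n$ is what makes the correlation between matching binary contractions and matching $n$-ary contractions clean.
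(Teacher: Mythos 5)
Your construction and the well-definedness/homomorphism verification are exactly those of the paper (replace each $n$-caret by a right comb, check compatibility with simple expansions). The problem is in your injectivity step: the claim that $(\Phi(\mathcal{T}),\Phi(\mathcal{S}))$ is reduced whenever $(\mathcal{T},\mathcal{S})$ is reduced is false. An exposed bottom $2$-caret of a $\Lambda_n$-block only records the positions of the \emph{last two} leaves of that block, so a matching pair of such carets in the two image trees does not force the two blocks to start at the same leaf, nor does it force the originating $n$-carets to be fully exposed. Concretely, take $n=3$, let $\mathcal{T}$ be a root $3$-caret with a $3$-caret of leaves attached at its third child, and let $\mathcal{S}$ be a root $3$-caret with a $3$-caret of leaves attached at its first child. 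The only exposed $3$-carets sit at leaf positions $\{2,3,4\}$ in $\mathcal{T}$ and $\{0,1,2\}$ in $\mathcal{S}$, so $(\mathcal{T},\mathcal{S})$ is reduced; but both $\Phi(\mathcal{T})$ and $\Phi(\mathcal{S})$ have an exposed binary caret at positions $(3,4)$ (the bottom caret of the attached block in the first case, the bottom caret of the root block in the second), so the image pair admits a simple contraction.

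Fortunately you do not need reducedness of the image at all, and you already state the observation that saves the argument: $\mathcal{T}\neq\mathcal{S}$ implies $\Phi(\mathcal{T})\neq\Phi(\mathcal{S})$, since $\Phi$ is injective on single trees (one recovers the $n$-ary tree by peeling off right combs from the root). Because \emph{any} binary tree pair representing the identity of $F$ — reduced or not — must have identical source and target trees, a pair with two distinct trees cannot represent the identity, so $\phi(g)\neq e$ for every nontrivial $g$. This is precisely the paper's proof of injectivity; I would simply delete the reducedness claim and its supporting paragraph and conclude directly from $\Phi(\mathcal{T})\neq\Phi(\mathcal{S})$.
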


\begin{proof}\label{proof1} 
Define a map  $\psi : \{\text{$n$-ary tree pairs}\} \rightarrow \{\text{binary tree pairs}\}$  as follows. 
For every element in  $F_n$  represented as an  $n$-ary tree pair, 
we replace every $n$-caret by an all-right binary tree 
(all carets except for the top caret are attached to the right most leaf of the previous caret)  
with  $n$  leaves.  
By this replacement, 
we can always obtain a binary tree pair from an $n$-ary one. 

To each simple expansion on a  $n$-ary tree pair  $(\mathcal{U}, \mathcal{V})$, 
we apply simple expansions  $n-1$  times  
on  $\psi((\mathcal{U}, \mathcal{V}))$  
by adding an all-right binary tree with  $n$  leaves  to the corresponding leaf.  
To each simple contraction, 
we apply the converse operation. 
These expanding or contracting operations obviously commute with the map  $\psi$, 
and hence,  
$\psi$  induces a group homomorphism  $\psi_* : F_n \to F_2$.   

It remains to show that $\psi_{\ast}$ is injective. We consider a non identity element in $F_n$ represented 
by a reduced  $n$-ary tree pair which consists of two different trees. 
The image of this tree pair by  $\psi$  representing an element in $F_2$ must consist of two different trees. 
Therefore, $\psi_{\ast}$ is injective. 
\end{proof}
\section{Quasi-isometric embeddings}

\subsection{Elements in $T_n$ represented as tree pairs}

From this subsection on, we will be focusing on the tree pair representation of the elements in $T_n$.  
When we talk about number of leaves or the number carets, we are talking about the number of those 
in either tree of a tree pair.

\subsubsection{Torsion elements in $T_n$} 

$T_n$  obviously contains torsion elements as  $T_2$  does, 
and Burillo, Cleary, Stein and Taback found 
a convenient tree pair representative of a torsion element as follows.   

\begin{theorem}[{\cite[Prop.6.1]{JSMJ09}}]
If $f \in T_2$ is a torsion element, 
then it can be represented by a (labelled) tree pair with the same source and target trees. 
\label{thm2}
\end{theorem}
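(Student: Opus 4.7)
The plan is to produce a finite $f$-invariant dyadic partition of $S^1$ whose subintervals are all standard dyadic intervals $[\ell/2^k,(\ell+1)/2^k]$ (a tree subdivision). The binary tree $\mathcal T$ corresponding to such a subdivision then yields a tree pair $(\mathcal T,\mathcal T)$ representing $f$: since $f$ preserves cyclic order on $S^1$ and permutes the intervals of an invariant subdivision, one endows the source copy of $\mathcal T$ with the standard cyclic leaf labelling and the target copy with the cyclic labelling induced by the permutation $f$ performs on the intervals.

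I would begin by choosing an arbitrary tree-pair representative $(\mathcal U_0,\mathcal V_0)$ of $f$ and setting $B_0\subset S^1$ to be the set of endpoints of the dyadic subdivision determined by $\mathcal U_0$. Because $f^n=\mathrm{id}$ for some $n$, the union
\[
B:=\bigcup_{i=0}^{n-1}f^i(B_0)
\]
is finite, dyadic, and $f$-invariant. On each of its intervals the map $f$ acts as an orientation-preserving affine bijection of slope a power of $2$, sending the interval onto another interval of the same subdivision.

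Next, I would enlarge $B$ to a set $B^\sharp$ that is simultaneously $f$-invariant and a tree subdivision. The natural procedure is to alternate two operations: tree-refinement (insert the minimal number of dyadic breakpoints needed to make every interval standard dyadic) and orbit-closure (take the union of $f$-orbits). Each operation outputs a finite dyadic superset, giving an ascending chain.

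The main obstacle is showing that this alternating chain stabilises in finitely many steps. The key ingredients are that (i) tree-refinement never introduces dyadic points of denominator larger than those already present in the set, because the uniform subdivision at the current maximum denominator is already a tree subdivision, and (ii) because $f^n=\mathrm{id}$, the logarithmic slopes of $f$ encountered around any orbit sum to zero, so the denominators of dyadic points cannot drift unboundedly along orbits. A careful bookkeeping argument combining these two constraints shows that only finitely many dyadic points can ever be produced, forcing stabilisation at some finite $B^\sharp$.

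Granting the existence of $B^\sharp$, let $\mathcal T$ be the binary tree whose leaves correspond to the intervals of $B^\sharp$. Since $B^\sharp$ is $f$-invariant and $f$ preserves cyclic order, the map $f$ cyclically permutes the leaves of $\mathcal T$. Labelling the source copy of $\mathcal T$ by the standard cyclic enumeration and the target copy by the cyclic labelling determined by this permutation produces a tree pair $(\mathcal T,\mathcal T)$ representing $f$, as required.
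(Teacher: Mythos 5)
Your overall strategy---produce a finite $f$-invariant subdivision of $S^1$ into standard dyadic intervals and read off a tree pair $(\mathcal T,\mathcal T)$ whose target labelling records the induced cyclic permutation of the leaves---is a legitimate and genuinely different route from the paper's. The paper argues purely combinatorially: it builds representatives $(\mathcal S_k,\mathcal T_k)$ of the powers $f^k$ via minimal joint expansions, shows by induction that each $\mathcal T_{k+1}$ is an expansion of $\mathcal T_k$, and then uses $f^m=\mathrm{id}$ to force $\mathcal E_{m-1}$ and $f(\mathcal E_{m-1})$ to be expansions of one another with equal caret count, hence equal. However, your proof has a genuine gap exactly at the step you defer to ``careful bookkeeping'': the stabilisation of the alternating refinement/orbit-closure chain is not established, and the two ingredients you cite do not yield it. Ingredient (ii) only says that each individual orbit is finite (immediate from $f^n=\mathrm{id}$), so denominators are bounded \emph{along a fixed orbit}; it says nothing about the new breakpoints created at the next round. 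Quantitatively: if the current maximum denominator is $2^k$, tree-refinement adds truncation points of denominator at most $2^k$, but their orbits can reach denominator $2^{k+S}$ (where $S$ bounds the partial sums of the log-slopes of $f,\dots,f^{n-1}$); after orbit-closure the maximum is $2^{k+S}$, the next refinement inserts points at that level whose orbits reach $2^{k+2S}$, and so on. Nothing in your argument rules out this escalation, so the chain is not shown to terminate.

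The construction can be repaired without the alternating process. Starting from your $f$-invariant set $B$ (which contains the breakpoints of $f$, so $f$ permutes the intervals of $B$ affinely), pick one interval $I$ from each cycle of this permutation, say of length $p$. Each of the finitely many affine maps $f|_I,\dots,f^{p-1}|_I$ has dyadic translation part and slope a power of $2$, and such a map sends every sufficiently short standard dyadic interval to a standard dyadic interval (one needs the length to be at most $2^{-(j+b)}$ where $2^{-j}$ is the denominator of the translation part and $2^{b}$ the slope). Subdivide $I$ into standard dyadic intervals shorter than this threshold and push the subdivision around the cycle; since $f^{p}|_I$ is an orientation-preserving affine self-map of $I$ of finite order, it is the identity, so no compatibility condition arises when the cycle closes. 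The union over all cycles is then a finite $f$-invariant standard dyadic subdivision, and the rest of your argument goes through. As written, though, the existence of $B^{\sharp}$---the heart of the proof---is asserted rather than proved.
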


\begin{proposition}\label{prop311}
If $f \in T_n$ is a torsion element, then it can be represented by 
a (labelled) tree pair with the same source and target trees.
\end{proposition}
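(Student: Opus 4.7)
The plan is to build an $n$-adic subdivision $P$ of $S^1$ that is invariant under $f$, in the sense that $f(P)=P$ as a set of pieces, and let $\mathcal{T}$ be the $n$-ary tree representing it. Because $f$ is an orientation-preserving homeomorphism of $S^1$, its induced action on the pieces of such a finite invariant subdivision is necessarily a cyclic rotation. Thus, fixing any cyclic labelling $0,1,\ldots,m-1$ of the source copy of $\mathcal{T}$ and labelling the target copy by a shift corresponding to this rotation produces a tree pair $(\mathcal{T},\mathcal{T})$ that represents $f$.

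To construct $P$, let $k$ be the order of $f$ and take any tree pair $(\mathcal{U},\mathcal{V})$ representing $f$, with source subdivision $P_0$. By applying finitely many simple expansions to both trees of the pair simultaneously one can arrange that the breakpoints of $P_0$ contain the set $B^\ast:=\bigcup_{i=0}^{k-1}f^i(B_f)$, where $B_f$ denotes the set of breakpoints of $f$ as a piecewise-linear map on $S^1$; the identity $f^k=\mathrm{id}$ gives $f(B^\ast)=B^\ast$, so $B^\ast$ is finite and $f$-invariant. With this enlargement in place, each iterate $f^i$ acts affinely on every piece of $P_0$, and so $P_i:=f^i(P_0)$ is itself an $n$-adic tree subdivision for every $0\le i\le k-1$. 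Setting $P=\bigwedge_{i=0}^{k-1}P_i$ for the common refinement, one computes
\[
f(P)\;=\;\bigwedge_{i=0}^{k-1}f(P_i)\;=\;\bigwedge_{i=0}^{k-1}P_{(i+1)\bmod k}\;=\;P,
\]
so $P$ is $f$-invariant. That $P$ is again a tree subdivision follows from the elementary fact that two standard $n$-adic intervals of the form $[j/n^p,(j+1)/n^p]$ are either disjoint or one contains the other, so each atom of the common refinement is again such a standard $n$-adic interval.

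The main technical obstacle is this preliminary refinement step: without forcing the breakpoints of $P_0$ to contain $B^\ast$, the iterated image $f(P_0)$ of an arbitrary tree subdivision need not be compatible with $f$ in the sense required for $f^2(P_0)$ to make sense as a tree subdivision, and the clean orbit identity $f(P_i)=P_{i+1}$ would fail. The finiteness of the order of $f$ guarantees that $B^\ast$ is a finite set of $n$-adic rationals, so one can always refine $P_0$ via simple expansions to contain it. Once this preparatory step is done, the rest is the standard observation that the common refinement of a finite $f$-orbit of subdivisions is always $f$-invariant, and one reads off the required tree $\mathcal{T}$ from the resulting subdivision.
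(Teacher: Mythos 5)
Your strategy --- produce an $f$-invariant $n$-adic subdivision of $S^1$ and read the symmetric tree pair off it --- is genuinely different from the paper's proof, which never touches the circle and instead runs an induction on the powers $f^k=(\mathcal S_k,\mathcal T_k)$ built from minimal joint expansions, concluding from $f^m=\mathrm{id}$ and a caret count that $\mathcal E_{m-1}=f(\mathcal E_{m-1})$. Your approach could work, but as written it has a real gap at exactly the step you call the main technical obstacle. The inference ``$f^i$ acts affinely on every piece of $P_0$, and so $P_i=f^i(P_0)$ is an $n$-adic tree subdivision'' is false: an affine map of slope $n^a$ carrying $n$-adic points to $n$-adic points need not carry a standard interval $\bigl[j/n^p,(j+1)/n^p\bigr]$ to a standard interval, because the image, though of length $n^{a-p}$ with $n$-adic endpoints, need not be aligned to the grid $\tfrac{1}{n^{p-a}}\mathbb Z$. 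Concretely, the rotation $r_{1/4}\in T_2$ has $B_f=\varnothing$, so $B^\ast=\varnothing$ and your saturation condition is vacuous, yet $r_{1/4}$ is affine on $[0,1/2]$ and sends it to $[1/4,3/4]$, which is not standard. Hence $f(P_0)$ being a tree subdivision is guaranteed only because $P_0$ refines the source subdivision $S_0$ of the chosen tree pair; it tells you nothing about $f^2(P_0)=f(f(P_0))$ unless $f(P_0)$ again refines $S_0$, and containing $B_f$ in its breakpoints does not ensure this. So the orbit identity $f(P_i)=P_{i+1}$ between \emph{tree} subdivisions, and with it the common refinement $P=\bigwedge_i P_i$, is not established.

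The fix is to saturate the wrong set's replacement: let $\Delta_0$ be the full breakpoint set of $S_0$ (generally strictly larger than $B_f$), put $\Delta=\bigcup_{i=0}^{k-1}f^i(\Delta_0)$, which is a finite $f$-invariant set of $n$-adic points containing $\Delta_0$, and let $P$ be the \emph{minimal} standard subdivision whose breakpoints contain $\Delta$. Then every piece of $P$ lies inside a piece of $S_0$ (by the nesting property of standard intervals), so $f(P)$ is a standard subdivision whose breakpoints contain $f(\Delta)=\Delta$; minimality forces $f(P)$ to refine $P$, and equality of the number of pieces gives $f(P)=P$. Note that this argument needs the minimality step (or some iteration) --- a single enlargement of the breakpoint set followed by ``take the common refinement of the orbit'' does not obviously close up, precisely because the intermediate images need not be tree subdivisions. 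With that repair your proof goes through and is arguably more geometric than the paper's; the paper's inductive bookkeeping with minimal joint expansions is the combinatorial shadow of this same saturation process.
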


\begin{proof}
Let  $m$  be the order of  $f$.  
Following the notation in \cite{JSMJ09}, 
we denote $f = (\mathcal{S}, \mathcal{T})$ for simplicity 
and we write $f =(\mathcal{S}, f(\mathcal{S}))$  for the convenience of the later argument. 
Expansion here simply means the expansion of a single tree.

We first construct the tree pair for  $f^k = (\mathcal{S}_k, \mathcal{T}_k)$  for  $k < m$ 
and then we could prove our hypothesis: $\mathcal{T}_{k+1}$  is an expansion of $\mathcal{T}_k$, by induction. 

\begin{itemize} 
\item 
	The base case:  we have  
	$f^2 = (\mathcal{S}, \mathcal{T})(\mathcal{S}, \mathcal{T}) 
	= (f^{-1}(\mathcal{E}_1), \mathcal{E}_1)(\mathcal{E}_1, f(\mathcal{E}_1))$, 
	where  $\mathcal{E}_1$  is the minimal joint expansion of  $\mathcal{T}$  and  $\mathcal{S}$.  
	Then define  $f^2 = (\mathcal{S}_2, \mathcal{T}_2)$  to be  $(f^{-1}(\mathcal{E}_1), f(\mathcal{E}_1))$.    
	Then  $\mathcal{T}_2 = f(\mathcal{E}_1) \supset \mathcal{T}$.   
\item 
	The inductive step: we assume the hypothesis is true for the case $k < m$. 
	We have the following, 
	\begin{equation*}
		(\mathcal{S}_{k-1}, \mathcal{T}_{k-1}) (\mathcal{S} , \mathcal{T}) 
		= (f^{-(k-1)}(\mathcal{E}_{k-1}), \mathcal{E}_{k-1})(\mathcal{E}_{k-1},f(\mathcal{E}_{k-1})) 
		= (\mathcal{S}_{k}, \mathcal{T}_{k}), 
	\end{equation*} 
	where  $\mathcal{E}_{k-1}$  is the minimal joint expansion 
	 $\mathcal{T}_{k-1}$  and  $\mathcal{S}$. 
	\begin{equation*}
		(\mathcal{S}_{k}, \mathcal{T}_{k}) (\mathcal{S} , \mathcal{T}) 
		= (f^{-k}(\mathcal{E}_{k}), \mathcal{E}_{k})(\mathcal{E}_{k},f(\mathcal{E}_{k})) 
		= (\mathcal{S}_{k+1}, \mathcal{T}_{k+1})
	\end{equation*} 
	where  $\mathcal{E}_{k}$  is the minimal  expansion of   
	$\mathcal{T}_{k}$  and  $\mathcal{S}$.
	Since $\mathcal{E}_{k}$  is an expansion of  both  $\mathcal{T}_k$  and  $\mathcal{S}$, 
	and in particular so does  $\mathcal{T}_{k-1}$  by the induction hypothesis.  
	Also since  $\mathcal{E}_{k -1}$  is a minimal expansion of  
	$\mathcal{T}_{k-1}$  and  $\mathcal{S}$,  
	$\mathcal{E}_k$  is an expansion of  $\mathcal{E}_{k-1}$.     
	Thus, 
	$\mathcal{T}_{k+1} = f(\mathcal{E}_{k}) \supset f(\mathcal{E}_{k-1}) = \mathcal{T}_{k}$  
\end{itemize}

By this inductive construction, 
we have  $id = f^m = (\mathcal{S}_m, \mathcal{T}_m)$, 
and thus  $\mathcal{S}_m = \mathcal{T}_m$.   
$(\mathcal{S}_m, \mathcal{T}_m)$ is represented by 
$$
(\mathcal{S}_m, \mathcal{T}_m) 
= (f^{-(m-1)}(\mathcal{E}_{m-1}), \mathcal{E}_{m-1})(\mathcal{E}_{m-1}, f(\mathcal{E}_{m-1})) 
= (\mathcal{S}_{m-1}, \mathcal{T}_{m-1})(\mathcal{S}, \mathcal{T}).   
$$ 

Since  $\mathcal{T}_m = f(\mathcal{E}_{m-1})$  is an expansion of  $\mathcal{T}_{m-1}$,   
and $\mathcal{E}_{m-1}$  is the minimal joint expansion of 
both  $\mathcal{T}_{m-1}$  and  $\mathcal{S}$,  
$f(\mathcal{E}_{m-1})$  is an expansion  $\mathcal{E}_{m-1}$.  
Also since  $f(\mathcal{E}_{m-1})$  and  $\mathcal{E}_{m-1}$  are 
the target and source trees of  $f^{m-1}$,  
which indicates that they have the same number of carets.  
Hence $\mathcal{E}_{m-1} = f(\mathcal{E}_{m-1})$ and they the identical trees in one tree pair.  
\end{proof}


Having the above result, 
we could obtain upper bounds for the order of torsion elements in $T_n$ simply by counting the number leaves of a tree.
The following proposition we obtain is a result of this argument with \cite[Theorem 1]{IL}.
  
\begin{proposition}\label{prop312} 
The order of a torsion element in $T_n$ is a divisor of $l(n-1)+1$ for some integer $l \geq 0$.
\end{proposition}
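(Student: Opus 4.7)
The plan is to combine Proposition \ref{prop311} with the combinatorial fact that cyclic labelings on a single $n$-ary tree differ by a cyclic shift. More precisely, by Proposition \ref{prop311} we may represent the torsion element $f$ of order $m$ as a tree pair $(\mathcal{S}, \mathcal{S})$ whose source and target trees are the same underlying $n$-ary tree $\mathcal{S}$. If $c$ denotes the number of $n$-carets of $\mathcal{S}$, then an easy induction on $c$ (each caret replaces one leaf by $n$ leaves, a net gain of $n-1$) shows that $\mathcal{S}$ has exactly $N := c(n-1)+1$ leaves. So as soon as we prove that $m$ divides $N$, we can take $l = c$ and be done.

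Next I would analyse how $f$ acts. Because both the source and target copies of $\mathcal{S}$ carry cyclic labelings, and any two cyclic labelings of the same tree differ only by the choice of the leaf labelled $0$, there is a fixed integer $k \in \{0, 1, \ldots, N-1\}$ so that the leaf at geometric tree-position $p$ has source-label $p - p_s \pmod{N}$ and target-label $p - p_t \pmod{N}$, with $k = p_t - p_s \pmod N$. Since $f$ sends the $i$-th source subinterval affinely onto the $i$-th target subinterval, it acts on the leaves of $\mathcal{S}$ as the cyclic shift by $k$.

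Iterating this combinatorial description, $f^j$ corresponds to cyclic shift by $jk \pmod N$, so $f^j = \mathrm{id}$ exactly when $jk \equiv 0 \pmod N$. Therefore the order $m$ of $f$ equals $N / \gcd(N,k)$, which divides $N = c(n-1)+1$. This gives the divisibility statement with $l = c$ (and the case $l=0$, i.e.\ $\mathcal{S}$ a single leaf, corresponds to $f = \mathrm{id}$, order $1$).

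The only genuinely delicate point is the last implication in the second paragraph, namely checking that the order of $f$ as an element of $T_n$ really is the order of its action on the leaves and not something smaller. This needs one to verify that the tree pair $(\mathcal{S}, \mathcal{S})$ with identical labellings on source and target represents the identity of $T_n$, and conversely that a nontrivial cyclic shift $k$ yields a nontrivial homeomorphism of $S^1$. Both follow immediately from the tree-pair description of $T_n$, but I would state them explicitly so that the equality $\mathrm{ord}(f) = N/\gcd(N,k)$ is not left as a mental leap.
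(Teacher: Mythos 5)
Your proposal is correct and follows essentially the same route as the paper: invoke Proposition \ref{prop311} to get a representative $(\mathcal{S},\mathcal{S})$ with identical source and target trees, observe that such a tree with $l$ carets has $l(n-1)+1$ leaves, and note that the element then acts as a cyclic shift on those leaves, so its order divides their number. The paper states this quite tersely, and your more careful justification of the cyclic-shift claim and of $\mathrm{ord}(f)=N/\gcd(N,k)$ only fills in details the paper leaves implicit.
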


\begin{proof} 
Following Proposition \ref{prop311}, 
we know that a torsion element always shifts the labeling of leaves.  
Let a tree pair  $(\mathcal{T}, \mathcal{T})$  represent a torsion element in  $T_n$.  
The number of leaves in  $\mathcal{T}$  is  $l(n-1)+1$,  
where $l$ is the number of carets.  
Hence the order of an element is a divisor of the number of leaves. 
\end{proof}


\subsubsection{The tree pair representatives} 

Recall the presentation of  $F_2$  in (\ref{pre1.1}), 
define $x_0 = A(t)^{-1}$, $x_1 = B(t)^{-1}$ and 
\begin{equation*}  
	x_{i} = x_0^{-1}x_{i-1}x_0 
\end{equation*} 
for integer  $i \geq 2$  recursively. 
By the definition above, $x_i$'s are also maps of dyadically subdivided unit intervals and 
can be represented as binary tree pairs. 

These tree pairs satisfy the infinite presentation 
of $F_2$ as follows (see for instance \cite{JWW}),  
\begin{equation}\label{pre3.1}
	F_2 = \big< x_i, i\geq 0 \mid x_i^{-1}x_jx_i = x_{j+1}  \mbox{ for }  i < j \big> .
\end{equation}
$F_n$  has a quite similar infinite presentation.  
Start with a finite set  $\{x_0, x_1, \cdots, x_{n-1}\}$  represented 
by  $n$-ary tree pairs in Figure \ref{fig8.7} and define 
\begin{equation*} 
	x_{i} = x_0^{-1} x_{i-(n-1)} x_0 
\end{equation*}  
for integer $i > n-1$  recursively.  
Then, 
it is shown in  \cite{JSM01}  that $F_n$  admits an infinite presentation as follows, 
\begin{equation}\label{pre3.2}
F_n = \big<  x_i, i \geq 0 \mid x_i^{-1} x_j x_i = x_{j+n-1} \textit{ for } i < j \big>. 
\end{equation} 

Let us denote the finite generating set  $\{ x_i \in F_n \, | \, 0 \leq i < n \}$  by  
$\Sigma_n$  and the infinite generating set  $\{ x_i \in F_n \, | \, i \geq 0 \}$  by  
$\Sigma_n'$.    

\begin{figure}[h!]
\centerline{\includegraphics[width=3.5in]{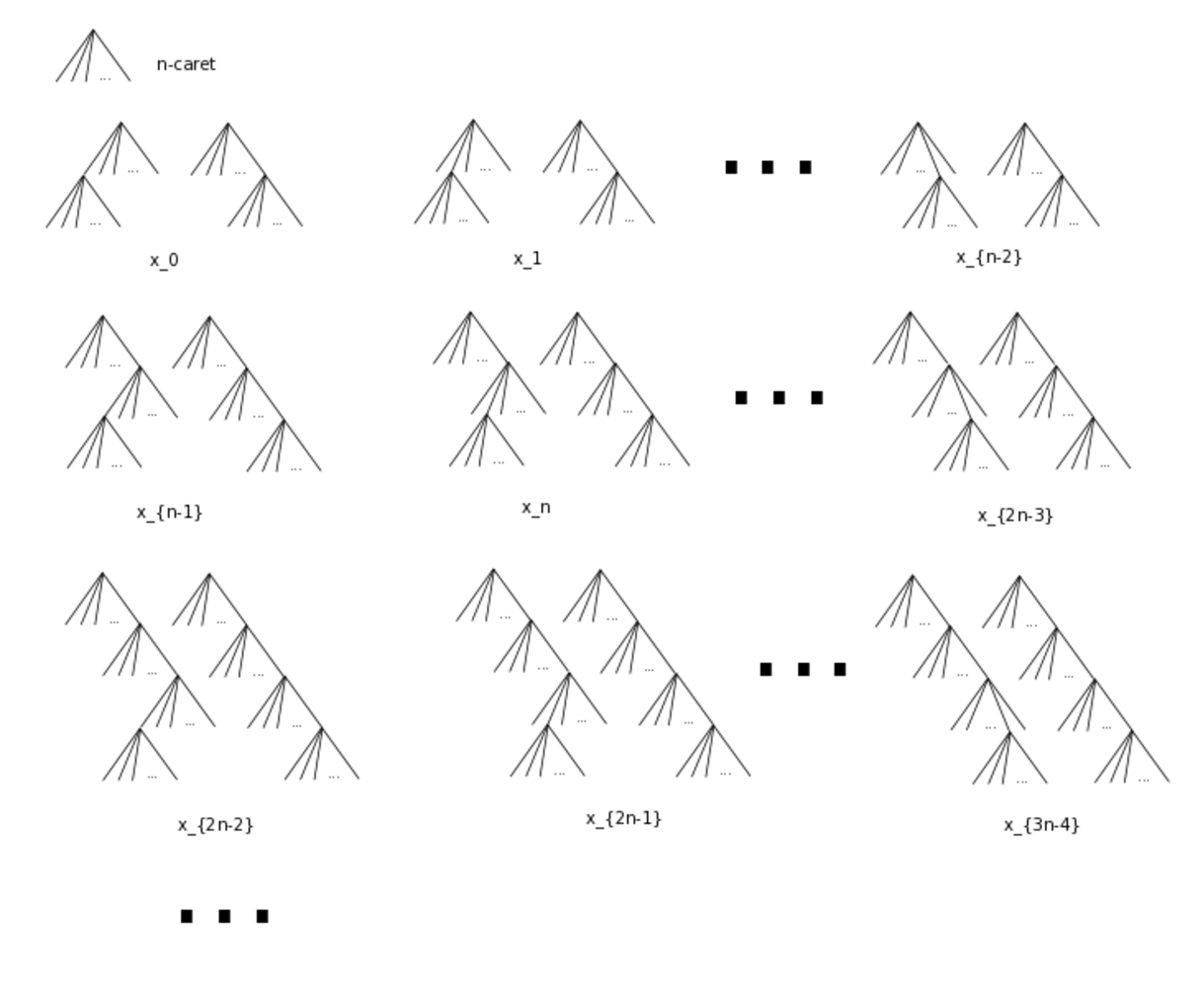}}
\vspace*{8pt}
\caption{An infinite generating set of $F_n$ in the form of tree pairs 
\label{fig8.7}}
\end{figure}

An infinite family of torsion elements
denoted by $c_{k-1}$   for positive integers  $k$  in  $T_n$  can be defined as 
a pair of all right trees with  $k$  carets  in  $T_n$  (Figure \ref{fig8.8}), 
where  $c_{k-1}$  is labeled so that the labelling the target tree is shifted 
to the left by one the source tree. 
$c_1$ can be identified with $C(t)$.  

\begin{figure}[h!]
\centerline{\includegraphics[width=3.5in]{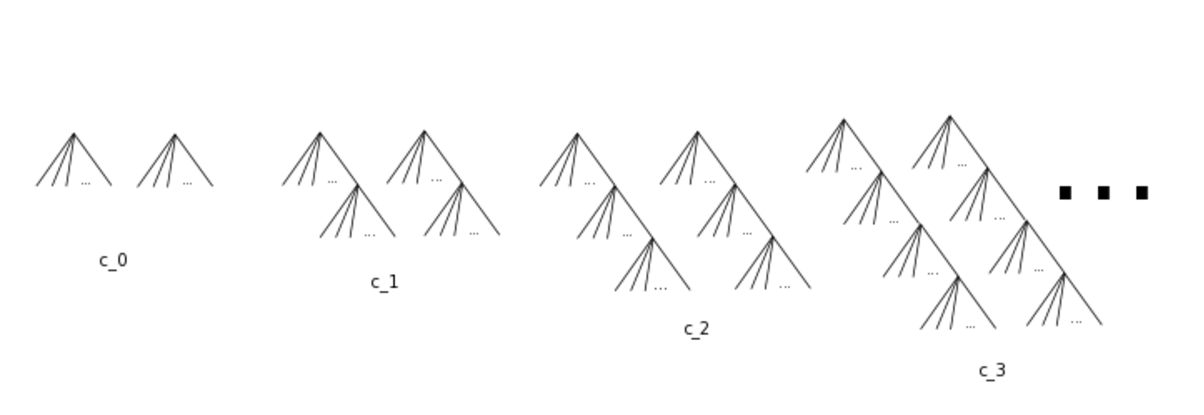}}
\vspace*{8pt}
\caption{An infinite series of torsion elements in $T_n$ in the form of tree pairs
\label{fig8.8}}
\end{figure}

It is known by  \cite{JSMJ09}  that 
\begin{align*} 
	\Sigma & = \{x_0, x_1, \ldots, x_{n-1}, c_0\},   \\ 
	\Sigma'&= \Sigma \cup \{c_k \, | \, \  1 \leq k \}  \\
	 & = \{x_0, x_1, \ldots, x_{n-1}, c_0, c_1, \ldots   \},
\end{align*}  

both form a generating set of  $T_n$,   
We will use both for the later argument.    

A finite presentation of  $T_2$  is given in \cite{JWW}, 
from which Burillo, Cleary, Stein and Taback  \cite{JSMJ09}  
have deduced what they call the pumping lemma by which they can  
reduce  $c_k$  by a word in  $c_{k-1}$  and  $x_i$'s.  
We have found an analogue of pumping relations for $T_n$ 
through the computation of the tree pairs representing elements in $T_n$.

\begin{lemma}\label{lm1} 
If  $\ell < (k-1)(n-1) + 1 = {\rm ord} c_{k-1}$,  
then 
\begin{align*} 
	\; c_{k}^{\ell} & = c_{k-1}^{\ell}x_{k(n-1)-\ell}, \quad \text{and} \\ 
	\; (c_k^{-1})^{\ell} & = x_{k(n-1)-\ell}^{-1} (c_{k-1}^{-1})^{\ell}.   
\end{align*} 
\end{lemma}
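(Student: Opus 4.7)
The second identity follows immediately from the first by taking inverses: one has $(c_k^\ell)^{-1} = (c_{k-1}^\ell x_{k(n-1)-\ell})^{-1} = x_{k(n-1)-\ell}^{-1} (c_{k-1}^{-1})^\ell$, which is precisely the claim. I therefore focus on establishing the first identity.

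My plan is to argue by induction on $\ell \geq 1$ and, at each step, reduce the claim to a direct tree pair computation. Using Proposition \ref{prop311} together with the hypothesis $\ell < \mathrm{ord}(c_{k-1}) = (k-1)(n-1)+1$, the power $c_{k-1}^\ell$ admits the reduced representative $(T_{k-1}, \sigma^\ell T_{k-1})$, where $T_{k-1}$ denotes the all-right $n$-ary tree with $k-1$ carets and $\sigma^\ell$ the cyclic left-shift of leaf labels by $\ell$; analogously $c_k^\ell$ is represented by $(T_k, \sigma^\ell T_k)$. The $F_n$-generator $x_{k(n-1)-\ell}$, obtained via the recursion $x_i = x_0^{-1} x_{i-(n-1)} x_0$, has a standard tree pair representative built from an all-right tree augmented by one extra $n$-caret at leaf position $k(n-1)-\ell$ on the source, with a matching modification on the target. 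For the base case $\ell = 1$, I compute the product $c_{k-1} \cdot x_{k(n-1)-1}$ directly: apply a simple expansion of the target tree of $c_{k-1}$ and the source tree of $x_{k(n-1)-1}$ to a common tree (obtained by attaching a single $n$-caret to the appropriate leaf), carry out the multiplication of tree pairs, and verify that the resulting pair of all-right $n$-ary trees with $k$ carets carries exactly the cyclic shift by one, which is the tree pair representative of $c_k$.

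For the inductive step, I compute
\[
c_k^{\ell+1} = c_k^\ell \cdot c_k = c_{k-1}^\ell \, x_{k(n-1)-\ell} \, c_{k-1} \, x_{k(n-1)-1}
\]
and reduce this to the form $c_{k-1}^{\ell+1} \, x_{k(n-1)-\ell-1}$. This reduction is an instance of the commutation identity $x_{k(n-1)-\ell} \, c_{k-1} \, x_{k(n-1)-1} = c_{k-1} \, x_{k(n-1)-\ell-1}$, which I again derive by explicit tree pair manipulation, using the infinite presentation \eqref{pre3.2} of $F_n$ to re-express the intermediate $x$-terms whose indices are shifted by the rotation $c_{k-1}$. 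The main obstacle is the careful bookkeeping of cyclic leaf labels throughout these multiplications: the shifts in $c_k$ and $c_{k-1}$ have different periods $k(n-1)+1$ and $(k-1)(n-1)+1$ respectively, so aligning them with the $x$-factors hinges on tracking exactly which leaf of the all-right tree receives the extra $n$-caret after each expansion. The hypothesis $\ell < (k-1)(n-1)+1$ is exactly what prevents the cyclic shift from wrapping past this added caret, keeping the tree pair reduced and the identification unambiguous.
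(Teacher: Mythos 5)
Your treatment of the second identity --- taking the inverse of the first --- is exactly what the paper does. For the first identity, however, you take a genuinely different route: the paper's proof is a single direct tree-pair multiplication carried out for arbitrary $\ell$ at once (its figure composes the all-right pair representing $c_{k-1}^{\ell}$ with the pair for $x_{k(n-1)-\ell}$ and reads off the pair for $c_k^{\ell}$), whereas you induct on $\ell$, reducing everything to the base relation $c_k=c_{k-1}x_{k(n-1)-1}$ together with the commutation identity $x_{k(n-1)-\ell}\,c_{k-1}\,x_{k(n-1)-1}=c_{k-1}\,x_{k(n-1)-\ell-1}$. This is a legitimate reorganization --- it is essentially how Burillo--Cleary--Stein--Taback derive their pumping lemma for $T_2$ from the relations of the infinite presentation of $T$ --- and it buys you two fixed ``atomic'' computations in place of one $\ell$-dependent one, at the cost of the inductive bookkeeping you rightly identify as the main hazard. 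Two cautions. First, your commutation identity is logically equivalent to the inductive step itself (given the base case, it says precisely that the formula for $\ell$ implies the formula for $\ell+1$), so it carries all of the content of the lemma; it is a true relation, but it still needs a tree-pair verification uniform in $k$ and $\ell$, which your write-up defers in the same way the paper defers its figure computation. Second, it cannot be extracted from the infinite presentation (\ref{pre3.2}) of $F_n$ alone, since it conjugates an $x$-generator by the torsion element $c_{k-1}\notin F_n$; the relations you actually need are the $T_n$-analogues of the mixed $c$--$x$ relations of $T$, which the paper never writes down and which must themselves be checked on labelled tree pairs. Provided you carry out those two concrete computations, your argument closes up correctly and proves the same statement.
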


\begin{proof}
Both identities are proved through computation using standard computation.
 \end{proof}
%
%


\subsection{A construction of an embedding from $T_n $ to $T_2 $}
\begin{theorem}\label{thm3.7}
There is an embedding from $T_n$ to $T_2= T$.
\end{theorem}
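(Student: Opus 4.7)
The plan is to build an embedding $\psi_\ast \colon T_n \to T_2$ by extending the caret-replacement map of Lemma \ref{lemma1} to cyclically labelled tree pairs. For a tree-pair representative $(\mathcal{U}, \mathcal{V})$ of an element of $T_n$, I would replace every $n$-caret in both trees by an all-right binary tree with $n$ leaves, carrying the cyclic leaf labels unchanged. Since caret replacement preserves the left-to-right order of leaves, the labelling on the image remains a cyclic labelling, and $(\psi(\mathcal{U}), \psi(\mathcal{V}))$ is a valid tree-pair representative of an element of $T_2$.

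I would then verify, exactly as in the proof of Lemma \ref{lemma1}, that $\psi$ commutes with simple expansions and contractions: an $n$-ary simple expansion at a leaf becomes $n-1$ binary simple expansions assembling an all-right block at the corresponding binary leaf. Hence $\psi$ descends to a well-defined map $\psi_\ast$ on equivalence classes. For the homomorphism property, I would apply $\psi$ to the standard tree-pair computation of a product: if the product of $(\mathcal{T}, \mathcal{S})$ and $(\mathcal{U}, \mathcal{V})$ is computed via a common expansion $\mathcal{R}$ of $\mathcal{S}$ and $\mathcal{U}$, then $\psi(\mathcal{R})$ is a common expansion of $\psi(\mathcal{S})$ and $\psi(\mathcal{U})$, and the resulting product tree pairs agree.

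The main obstacle is injectivity. Unlike in the $F_n \to F_2$ case, a reduced $n$-ary tree pair can fail to map to a reduced binary tree pair, since the bottom $2$-caret inside an all-right block may permit extra simple contractions in the image. To side-step this I would pass to the piecewise-linear interpretation: a cyclically labelled binary tree pair $(\psi(\mathcal{U}), \psi(\mathcal{V}))$ with cyclic shift $k$ represents a piecewise-linear homeomorphism $\tilde f$ of $S^1$ that sends the $i$th subinterval of the dyadic subdivision determined by $\psi(\mathcal{U})$ affinely onto the $(i+k)$th subinterval of that determined by $\psi(\mathcal{V})$. Such $\tilde f$ is the identity if and only if the two dyadic subdivisions coincide (equivalently, $\psi(\mathcal{U}) = \psi(\mathcal{V})$ as unlabelled binary trees) and $k = 0$.

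Finally, caret replacement is tree-injective: distinct $n$-ary trees produce distinct binary trees, because one can uniquely recover the original $n$-ary structure from its image by grouping the binary carets into all-right $n$-leaf blocks starting from the root. So $\psi(\mathcal{U}) = \psi(\mathcal{V})$ forces $\mathcal{U} = \mathcal{V}$, and since $\psi$ preserves leaf labels the condition $k = 0$ on the image matches $k = 0$ on the original. Therefore $\psi_\ast(g) = e$ forces $g = e$ in $T_n$, giving injectivity and hence the desired embedding.
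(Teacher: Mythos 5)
Your proposal is correct and follows essentially the same route as the paper: the caret-replacement map sending each $n$-caret to an all-right binary tree with $n$ leaves, checked to commute with simple expansions and contractions so that it induces a homomorphism $T_n \to T_2$. The only divergence is in the injectivity step, where the paper splits into the case of $F_n$ (covered by Lemma \ref{lemma1}) and of $T_n \setminus F_n$ (where the nontrivial cyclic shift of labels rules out the identity), whereas you argue directly via the piecewise-linear interpretation together with the recoverability of an $n$-ary tree from its binary image; both arguments are sound, and yours has the merit of explicitly addressing that the image of a reduced pair need not be reduced.
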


\begin{proof} 
Define a map  $\phi : \{\text{labelled $n$-ary tree pairs}\} \rightarrow \{\text{labelled binary tree pairs}\}$  
as follows. 
For every element in  $T_n$  represented as an $n$-ary tree pair, 
we replace every $n$-caret by an all-right binary tree with  $n$  leaves.  
Hence, we can always obtain a binary tree pair from an $n$-ary one. 
$\phi$ induces a group homomorphism  $\phi_* : T_n \to T_2 $ by the fact that simple expansion (contraction) is compatible with the operations in groups.   
Injectivity is proved similar as in the case of $F_n \rightarrow F$.  
\end{proof} 
\begin{remark}
This argument also works for the generalised group in the Thompson's group $V$ case, however, we are not going to discuss here.
\end{remark}

Despite of the embedding result of  $F_2$  to  $F_n$  in \cite{JSM01}, 
we cannot go other way around.  

\begin{theorem}
There is no injective homomorphism from $T_2 $ to $T_n $ where $n > 2$.
\label{thm1}
\end{theorem}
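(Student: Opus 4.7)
The plan is to exploit the restriction on the orders of torsion elements in $T_n$ given by Proposition \ref{prop312}: every torsion element of $T_n$ has order dividing $\ell(n-1)+1$ for some $\ell \geq 0$. The first step would be to convert this into a purely number-theoretic statement, namely that a positive integer $d$ divides some $\ell(n-1)+1$ if and only if $\gcd(d,n-1)=1$. The ``only if'' direction is immediate: any common divisor of $d$ and $n-1$ would divide both $\ell(n-1)$ and $\ell(n-1)+1$, hence $1$. The ``if'' direction uses the fact that $n-1$ is invertible modulo $d$ when $\gcd(d,n-1)=1$, so one can solve $\ell(n-1)\equiv -1\pmod d$ with $\ell\geq 0$. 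Consequently, the order of every torsion element of $T_n$ is coprime to $n-1$.

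Next I would exhibit a torsion element of $T_2$ whose order is \emph{not} coprime to $n-1$. Since $n\geq 3$, the element $c_{n-3}\in T_2$ is a natural choice: it is represented by a pair of all-right binary trees with $n-2$ carets (hence $n-1$ leaves) whose target labelling is the cyclic shift of the source by one, and so it has order exactly $n-1$. Since $\gcd(n-1,n-1)=n-1\geq 2$, this order fails to be coprime to $n-1$, so by the previous paragraph $T_n$ has no element of order $n-1$.

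The conclusion then follows from the fact that any injective group homomorphism preserves the orders of torsion elements: if $\phi$ is injective and $g$ has order $k$, then $\phi(g)^k=1$, while $\phi(g)^m=1$ for some $0<m<k$ would force $\phi(g^m)=\phi(1)$ and hence $g^m=1$, contradicting the order of $g$. An injection $T_2\hookrightarrow T_n$ would therefore send $c_{n-3}$ to a torsion element of $T_n$ of order $n-1$, which we have just ruled out.

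The whole argument is short and I do not anticipate any serious obstacle; the only mildly nontrivial input is the number-theoretic characterisation of divisors of $\ell(n-1)+1$, which is an immediate consequence of Bezout's identity. The main point is really that Proposition \ref{prop312}, which looks like a bound that could a priori be wasteful, is actually sharp enough to detect the obstruction of order $n-1$ present in $T_2$.
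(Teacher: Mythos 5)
Your proposal is correct and follows essentially the same route as the paper: Proposition \ref{prop312} forces the orders of torsion elements of $T_n$ to be coprime to $n-1$, hence excludes order $n-1$, while $T_2$ contains a torsion element of order $n-1$ (a cyclic shift of an all-right tree pair with $n-1$ leaves), and injective homomorphisms preserve orders. The paper states this in two sentences without the number-theoretic justification or the explicit witness, both of which you supply correctly, so your version is simply a fully detailed form of the same argument.
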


\begin{proof} 
By Proposition \ref{prop312}, as well as \cite[Consequence 3 in Theorem1]{IL}
we know that $T_n$ does not contain torsion elements of order $n-1$.  
However, $T_2$ does contain torsion elements of order $n-1$. 
\end{proof}

\begin{corollary}
There is no injective homomorphism from $T_{p}$ to $T_{q}$ when $p, q$ are two consecutive numbers.  
\end{corollary}

\subsection{An estimate of the word length of elements in $T_n$}

\subsubsection{Word length of elements in $T_n$}
For estimating the word length of elements in $T_n$, 
we extend the argument in \cite{JSMJ09}. 

Extending the definition in \cite[Page 9]{JSMJ09},  
we start with the unique reduced tree pair representing an element in  $T_n $   and 
represent it by the concatenation of three words in  $\Sigma'$.  
The formal definition is in the following, 

\begin{definition}[$\textbf{pcq}$ factorization]
Let the reduced labelled tree pair  $(\mathcal{T}_{-}, \mathcal{T}_{+} )$  represent 
an element in  $T_n(V_n)$  and  $\mathcal{T}_{-}$  and  $\mathcal{T}_{+}$  each have  
$k$ carets.  
Let $\mathcal{R}$ be the all right tree with $k$ carets.  
We write the element as a product $\textbf{p}\textbf{c}\textbf{q}$,  
where:
\begin{enumerate}
\item 
	$\textbf{p}$,   
	a positive word in the infinite generating set  $\Sigma_n$  of  $F_{n}$  with 
	the form $\textbf{p} = x_{i_1}^{r_1}x_{i_2}^{r_2} \cdots x_{i_y}^{r_y}$  
	where  $i_1 < i_2 < \cdots < i_y $  and all  $r_k$'s  are positive.  
	It is represented by  $(\mathcal{R}, \mathcal{T}_{+})$. 
       
	\item  For $\omega$ representing an element in $T_n$,
	$\sigma = c_{k}^{\ell}$, 
	where ${\ell}$  appearing in the exponent of  $c_k$  satisfies 
	$0 \leq \ell < k(n-1)+1 = {\rm ord} \, c_{k}$.  
	This element can be represented by  $(\mathcal{R}, \mathcal{R})$  
	with appropriate labeling. 
	
	
\item 
	$\textbf{q}$, 
	a negative word in  $\Sigma_n$  of  $F_n$  with 
	the form  $x_{j_z}^{-s_z} \cdots x_{j_2}^{-s_2}x_{j_1}^{-s_1}$  
	where $j_1< j_2 <\cdots < j_z$  and all  $s_k$'s  are positive.  
	It is represented by $(\mathcal{T}_{-}, \mathcal{R})$.   
\end{enumerate} 
We call such a product a {\em $\textbf{pcq}$ factorization}. 
\label{def3.7}
\end{definition}

A  $\textbf{pcq}$  factorization for an element in  $T_n$  can be found 
by seeking a obvious path from  $\mathcal{T}_{\pm}$  to  $\mathcal{R}$  
starting from the reduced pair  $(\mathcal{T}_{-}, \mathcal{T}_{+})$ (Figure \ref{fig10.1}).     
Notice that the tree pairs representing $\textbf{p}$ and $\textbf{q}$ are not necessarily reduced 
though   $(\mathcal{T}_{-}, \mathcal{T}_{+})$  is. 
Moreover, 
when the tree pair representing some element in  $F_n $  is reduced, 
the torsion part of the $\textbf{pcq}$ factorization represents just an identity element.

%
%
%
\begin{figure}[h]
\centerline{\includegraphics[width=3.5in]{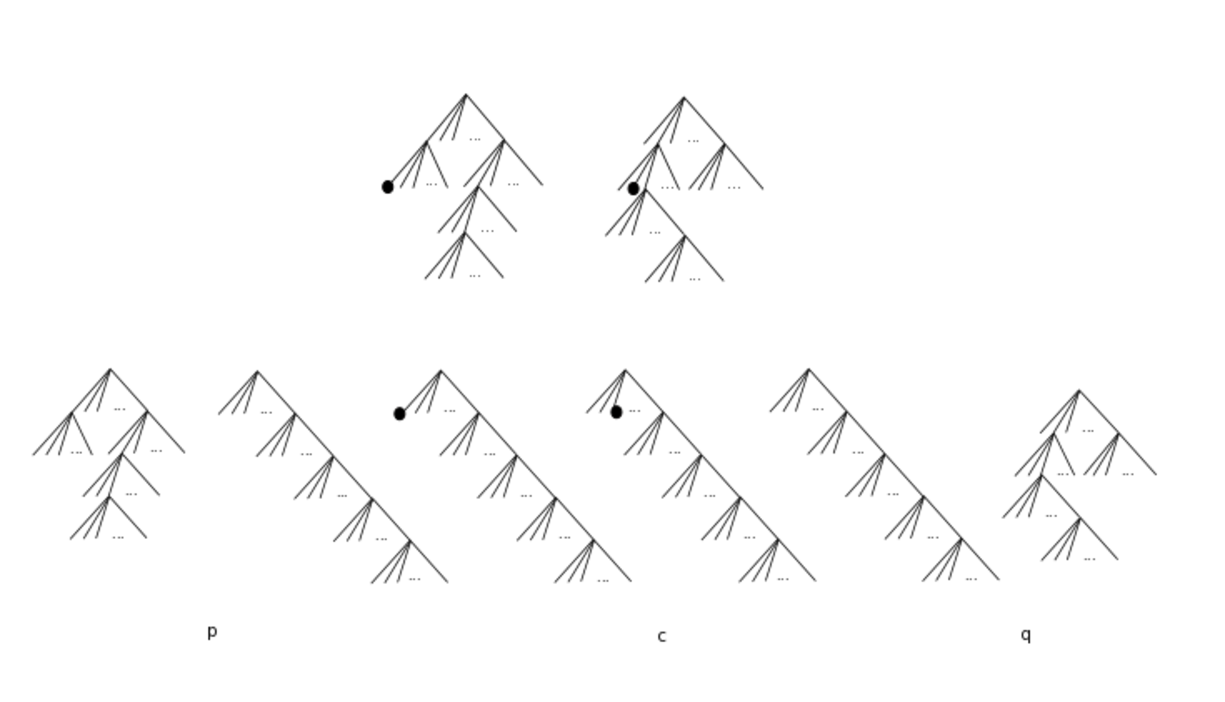}}
\caption{$\textbf{pcq}$ factorisation}
\label{fig10.1}
\end{figure}

For the estimation, 
we first consider the word length of a word only consisting of  $c_k$ with respect to 
the finite generating set  $\Sigma = \{x_0, x_1, \cdots x_{n-1}, c_0\}$.  
We let  $|w|_{\Sigma}$  denote the word length of  $w \in T_n$  
with respect to  $\Sigma$.

\begin{proposition}\label{prop351} 
If  \, $0 < \ell < k(n-1)+ 1 = {\rm ord} \, c_{k-1}$, 
then  $|c_{k}^{\ell}|_{\Sigma} < 3k+n$. 
\end{proposition}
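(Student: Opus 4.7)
The plan is to iterate Lemma \ref{lm1} in both its positive and inverse forms in order to reduce $c_k^\ell$ to a product of $x$-factors together with a very short torsion remnant, and then to exploit the cancellations between consecutive $x$-factors that come from conjugation by $x_0$. Write $\ell = q(n-1) + r$ with $q \geq 0$ and $0 \leq r \leq n-2$; the hypothesis $\ell < k(n-1)+1$ forces $q \leq k$, with $q \leq k-1$ when $r > 0$. Applying the first identity of Lemma \ref{lm1} repeatedly as long as the condition $\ell < j(n-1)+1$ allows, I descend from $j = k$ to the smallest admissible index $j_0$ (equal to $q-1$ when $r = 0$, and to $q$ otherwise), obtaining
\[
c_k^\ell \;=\; c_{j_0}^{\ell} \cdot \prod_{j = j_0+1}^{k} x_{j(n-1) - \ell}.
\]

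For each factor in the product, the relation $x_{i+(n-1)} = x_0^{-1} x_i x_0$ (immediate from the infinite presentation (\ref{pre3.2})) puts $x_{j(n-1) - \ell}$ in the form $x_0^{-(j - j_0 - 1)} x_s x_0^{j - j_0 - 1}$, where $s \in \{0, 1, \ldots, n-1\}$ depends only on $r$ and not on $j$. Multiplying these factors in order, the conjugating powers of $x_0$ telescope and the whole product collapses to a word of shape $x_s (x_0^{-1} x_s)^{k - j_0 - 1} x_0^{k - j_0 - 1}$, whose length in $\Sigma$ is at most $3(k - j_0)$. This telescoping is the crux of the estimate: without it each factor alone would contribute up to roughly $2k$ letters, giving only a quadratic bound in $k$.

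It remains to control $|c_{j_0}^{\ell}|_\Sigma$. Because $\mathrm{ord}\, c_{j_0} = (j_0 + 1)(n-1) + 1$, the exponent $\ell$ lies within $n - 1$ of this order, so I may rewrite $c_{j_0}^{\ell} = c_{j_0}^{-s'}$ with $s' \leq n - 1$ (explicitly, $s' = 1$ when $r = 0$ and $s' = n - r$ when $r > 0$). Applying the second (inverse) identity of Lemma \ref{lm1} iteratively then reduces $c_{j_0}^{-s'}$ all the way down to $c_0^{-s'}$, and the accumulated product of $x^{-1}$-factors telescopes in exactly the same way to a word of length at most $3 j_0$ in $\Sigma$, while $c_0^{-s'}$ itself contributes at most $s' \leq n - 1$ further letters. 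Adding the three contributions yields $|c_k^\ell|_\Sigma \leq 3(k - j_0) + 3 j_0 + (n - 1) = 3k + n - 1 < 3k + n$. The main obstacle is the bookkeeping: carrying out the telescoping carefully in both regimes $r = 0$ and $r > 0$, and checking that the residual additive constants coming out of each telescope are non-negative so that the stated strict inequality holds. The argument is essentially the generalisation to $T_n$ of the $n = 2$ estimate of Burillo--Cleary--Stein--Taback.
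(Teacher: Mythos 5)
Your proposal is correct and follows essentially the same route as the paper: iterate the first identity of Lemma \ref{lm1} down to the ``turning point'' determined by the quotient of $\ell$ by $n-1$, switch to the inverse identity to descend to $c_0$, rewrite the accumulated $x$-factors in $\Sigma$ via $x_{i+(n-1)}=x_0^{-1}x_ix_0$, and let the conjugating powers of $x_0$ telescope. In fact you carry out the final length count more explicitly than the paper, which stops at ``the upper bound follows immediately,'' so no changes are needed beyond reconciling your decomposition $\ell=q(n-1)+r$ with the paper's $\ell-1=q(n-1)+r$.
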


\begin{proof}
By the division theorem,  
$\ell -1$  can be presented uniquely as  $q(n-1) + r$  with the remainder  $0 \leq  r < n-1$,  
and the quotient $0 \leq q \leq k$  by the assumption for  $\ell$.  
We use the identities in Lemma \ref{lm1} to reduce  $c_k$  to  the expression in  
$c_0$  and  $x_i$'s.    
The computation below contains a turning point, to use either the first identity or the second one,
when  $\ell$  becomes greater than the order of the next finite order element.  
The following manipulation clarifies how we apply the identities in Lemma \ref{lm1}.  
\begin{align*} 
	c_k^{\ell} 
	& = c_{k-1}^{\ell} x_{k(n-1) - \ell} \\ 
	& = c_q^{\ell} x_{(q+1)(n-1) -\ell} \, \cdots \, x_{k(n-1) - \ell} \\ 
	& = (c_q^{-1})^{({\rm ord} \, c_q-\ell)} x_{(q+1)(n-1) -\ell} \, \cdots \, x_{k(n-1) - \ell} \\ 
	& = (c_q^{-1})^r x_{(q+1)(n-1) -\ell} \, \cdots \, x_{k(n-1) - \ell} \\ 	
	& = x^{-1}_{q(n-1)-r} (c_{q-1}^{-1})^r x_{(q+1)(n-1) -\ell} \, \cdots \, x_{k(n-1) - \ell} \\  
	& = x^{-1}_{q(n-1)-r} \, \cdots x^{-1}_{(n-1)-r} (c_0^{-1})^r x_{(q+1)(n-1) -\ell} \, \cdots \, x_{k(n-1) - \ell} 
\end{align*} 	

Then, 
to obtain the word expression in terms of letters in  $\Sigma_{T_n}$, 
we replace the term  $x_{\alpha}$  not in  $\Sigma$  by the 
relation  $x_{\alpha} = x_{0}^{-\gamma} x_{\delta}x_{0}^{\gamma}$, 
where  $\alpha = \gamma(n-1) + \delta$  such that  $\gamma \geq 0$  and  
$0 \leq \delta < n-1$.  
The resulting expression has a cancelling pair between two conjugates such as 
$x_{\alpha-(n-1)}x_{\alpha} = 
(x_{0}^{-(\gamma-1)} x_{\delta}x_{0}^{\gamma-1})(x_{0}^{-\gamma}x_{\delta}x_{0}^{\gamma}) = 
x_{0}^{-(\gamma-1)} x_{\delta}x_{0}^{-1}x_{\delta}x_{0}^{\gamma}$.   
The upper bound follows immediately.    
\end{proof} 


Now we estimate the word length of an element of  $T_n$ with respect to 
the finite generating set $\Sigma = \{x_0, x_1, \cdots x_{n-1}, c_0\}$,

\begin{theorem}\label{thm5} 
The following inequalities hold, 
\begin{equation*} 
	\frac{N_{\Sigma}(\omega)}{3} \leq | \omega |_{\Sigma} \leq 15(n-1)N_{\Sigma}(\omega) + 3n,  
\end{equation*} 
where  $N_{\Sigma}(\omega)$  denotes the number of carets of a reduced tree pair 
representing $\omega$.  
\end{theorem}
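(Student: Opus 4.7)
The plan is to prove the two inequalities by separate and essentially independent arguments: a direct caret count for the lower bound, and a three-term decomposition based on the $\textbf{pcq}$ factorization for the upper bound.

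For the lower bound $|\omega|_\Sigma \geq N_\Sigma(\omega)/3$, I would observe that each letter in $\Sigma = \{x_0, \ldots, x_{n-1}, c_0\}$ has a tree pair with at most two carets (the $x_i$'s each have exactly two, and $c_0$ has one). When two tree pairs $(\mathcal{T}_1, \mathcal{S}_1)$ and $(\mathcal{T}_2, \mathcal{S}_2)$ are multiplied, the minimal common refinement of $\mathcal{S}_1$ and $\mathcal{T}_2$ has at most $|\mathcal{S}_1| + |\mathcal{T}_2|$ carets, and the subsequent reduction only decreases this count; hence $N_\Sigma(g_1 g_2) \leq N_\Sigma(g_1) + N_\Sigma(g_2)$. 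Iterating over a geodesic word $\omega = g_1 \cdots g_L$ with $L = |\omega|_\Sigma$ yields $N_\Sigma(\omega) \leq 2L \leq 3L$, which rearranges to the desired inequality.

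For the upper bound, I would take the reduced tree pair of $\omega$, with $k = N_\Sigma(\omega)$ carets, and apply the $\textbf{pcq}$ factorization of Definition \ref{def3.7} to write $\omega = \textbf{p}\, c_k^{\ell}\, \textbf{q}$, then bound the three pieces separately. The torsion piece is controlled by Proposition \ref{prop351}: $|c_k^{\ell}|_\Sigma \leq 3k + n$. The outer factors $\textbf{p}$ and $\textbf{q}$ lie in $F_n \subset T_n$, represented by $(\mathcal{R}, \mathcal{T}_+)$ and $(\mathcal{T}_-, \mathcal{R})$, each tree pair having $k$ carets. Invoking the word-length estimate for $F_n$ with respect to the finite generating set $\Sigma_n$ established by Burillo, Cleary and Stein in \cite{JSM01}, I would take $|\textbf{p}|_\Sigma, |\textbf{q}|_\Sigma \leq 6(n-1) k$. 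Combining the three bounds gives
\begin{equation*}
|\omega|_\Sigma \leq 6(n-1)k + (3k+n) + 6(n-1)k = \bigl(12(n-1)+3\bigr)k + n \leq 15(n-1) N_\Sigma(\omega) + 3n,
\end{equation*}
where the last step uses $12(n-1)+3 \leq 15(n-1)$ for all $n \geq 2$, with ample slack absorbed into the $+3n$ term.

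The main obstacle is justifying the linear word-length estimate for $\textbf{p}, \textbf{q} \in F_n$ with the constant $6(n-1)$. Writing $\textbf{p} = x_{i_1}^{r_1} \cdots x_{i_y}^{r_y}$ in the infinite-generator normal form and then replacing each $x_i$ with $i \geq n$ by $x_0^{-j} x_\delta x_0^{j}$, where $i = j(n-1) + \delta$ with $0 \leq \delta < n-1$, naively produces a quadratic bound in $k$; the linear bound is recovered only after exploiting the cancellations between consecutive conjugating blocks of the form $x_0^{j} \cdot x_0^{-j'}$. This is essentially the content of the $F_n$ estimate in \cite{JSM01}, which one can either cite directly or reprove by induction on the depth $i_y$ of the deepest generator in $\textbf{p}$, using a tree-based accounting of how many carets each $x_{i_j}^{r_j}$-block contributes to $\mathcal{T}_+$.
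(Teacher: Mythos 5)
Your proposal is correct and follows essentially the same route as the paper: the lower bound via subadditivity of the caret count over a geodesic word, and the upper bound via the $\textbf{pcq}$ factorization, Proposition \ref{prop351} for the torsion factor, and the Burillo--Cleary--Stein linear caret estimates for $\textbf{p}$ and $\textbf{q}$. The only difference is bookkeeping: where you bound each factor directly by a multiple of $N_\Sigma(\omega)$ (asserting the constant $6(n-1)$), the paper derives the $F_n$ estimate explicitly as $|\textbf{p}|_\Sigma \leq \sum r_u + 2i_y$ and then passes through the intermediate quantity $D_n(\omega)$ by averaging five caret inequalities.
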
 

\begin{proof} 
Since the  $N_{\Sigma}(x_{i})$  is at most  $3$  for  $x_{i} \in \Sigma$,  
\begin{equation*} 
	N_{\Sigma}(\omega) \leq 3 |\omega|_{\Sigma}. 
\end{equation*}   

To see the upper bound, 
suppose  $\omega \in T_n$  has a  $\textbf{pcq}$ factorization, 
\begin{equation*} 
	\omega = x_{i_1}^{r_1}x_{i_2}^{r_2} \cdots x_{i_y}^{r_y}c_{k}^{\ell} x_{j_z}^{-s_z} 
	\cdots x_{j_2}^{-s_2}x_{j_1}^{-s_1}, 
\end{equation*} 

Then, we define $D_n(\omega) = \sum_{u=1}^{y}r_u +\sum_{v=1}^{z} s_{v} + i_y + j_z + k +n$, 
and we first prove  $| \omega |_{\Sigma} \leq 3D_n(\omega)$.  
Since 
\begin{align*} 
	|\omega|_{\Sigma} 
	& = |x_{i_1}^{r_1} \, \cdots \, x_{i_y}^{r_y}c_{k}^{\ell} x_{j_z}^{-s_z} \cdots \, x_{j_1}^{-s_1}|_{\Sigma} \\ 
	& \leq  |x_{i_1}^{r_1} \, \cdots \, x_{i_y}^{r_y}|_n + |c_{k}^{\ell}|_{\Sigma} 
		+ |x_{j_z}^{-s_z} \, \cdots \, x_{j_1}^{-s_1}|_{\Sigma},  
\end{align*} 
replacing  $x_{i_{\alpha}}$  in the positive word by the relation  
$ x_{i_{\alpha}} = x_{0}^{-\gamma} x_{\delta_{\alpha}}x_{0}^{\gamma}$  where 
$i_{\alpha} = \gamma_{\alpha}(n-1) + \delta_{\alpha}$  such that  
$\gamma_{\alpha}$  is nonnegative and  $0 \leq \delta_{\alpha} < n-1$, 
we can estimate its word length as follows.  
Notice that  $\gamma_1 \leq \gamma_2 \leq \cdots \leq \gamma_y$  because  
$i_1 < i_2 < \cdots < i_y$.  
\begin{align*}
	|x_{i_1}^{r_1}x_{i_2}^{r_2} \cdots x_{i_y}^{r_y} |_{\Sigma}  
	& = |(x_0^{-1})^{\gamma_1}x_{\delta_1}^{r_1}x_0^{\gamma_1} 
	(x_0^{-1})^{\gamma_2}x_{\delta_2}^{r_1}x_0^{\gamma_2} 	
	\, \cdots \, (x_0^{-1})^{\gamma_y}x_{\delta_y}^{r_y}x_0^{\gamma_y} |_{\Sigma} \\
	& = |(x_0^{-1})^{\gamma_1}x_{\delta_1}^{r_1}x_0^{\gamma_1-\gamma_2}
	x_{\delta_2}^{r_2}x_0^{\gamma_2-\gamma_3} \,    
	\cdots \, x_0^{\gamma_{y-1}-\gamma_y}x_{\delta_y}^{r_y}x_0^{\gamma_y} |_{\Sigma} \\
	& \leq  r_1 + r_2  +\cdots + r_y + 2i_y.  
\end{align*} 
Similarly, 
\begin{equation*} 
	| x_{j_z}^{-s_z} \cdots x_{j_2}^{-s_2}x_{j_1}^{-s_1} |_{\Sigma} 
	\leq s_1 + s_2 + \cdots + s_z + 2j_z.
\end{equation*}  
Summing up these two with the estimate of Proposition \ref{prop351}, 
we obtain  $|\omega|_{\Sigma} \leq 3D_n(\omega)$.  

Now,  
$|\omega|_{\Sigma}$ can be found by looking at the multiplication 
of two elements as tree pairs. 
Recall that the tree pairs representing  $\textbf{p}$  and  $\textbf{q}$  in this setting 
might be not reduced.  
Thus  $N_{\Sigma}(\omega) \geq N_{\Sigma}(\bf{p})$, $N_{\Sigma}(\omega) \geq N_{\Sigma}(\bf{q})$.  
Also since we have  started with the reduced tree pair for  $\omega$,  
$N_{\Sigma}(\omega) = N_{\Sigma}(c_k^{\ell}) = k+1$. 
We would like to estimate  $N_{\Sigma}(\omega)$  from below by the average of 
some estimates for  $N_{\Sigma}(\textbf{p}), N_{\Sigma}(\textbf{q})$  and  $N_{\Sigma}(c_k^{\ell})$.  

Combining with the results in \cite[Theorem 5]{JSM01}, we have the following, 
\begin{align*} 
	N_{\Sigma}(\textbf{p})  &  \geq r_1 + r_2 +\cdots+ r_y \geq \frac{1}{n-1}(r_1 + r_2 +\cdots+ r_y), \\
	N_{\Sigma}(\textbf{p})  &  \geq \left\lfloor \frac{i_y}{n-1} \right\rfloor +1 \geq \frac{i_y}{n-1}, \\ 
	Similarly, N_{\Sigma}(\textbf{q})  &  \geq \left\lfloor \frac{j_z}{n-1} \right\rfloor +1\geq \frac{j_z}{n-1}  \quad \text{and} \\ 
	N_{\Sigma}(c_k^{\ell})  &  = k+1 > \frac{k}{n-1}.  
\end{align*} 
Taking the average of these inequalities, 
we have 
\begin{equation*} 
	5 N_{\Sigma}(\omega) \geq \frac{1}{n-1} (D_n(\omega)-n) . 
\end{equation*} 
Thus  
\begin{equation*} 
	|\omega|_{\Sigma} \leq 3D_n(\omega) \leq 15(n-1)N_{\Sigma}(\omega) +3n
\end{equation*} 
and we are done.  
\end{proof}

\subsubsection{Word length of elements in $F_n \subset T_n$}

We then look at the word length of elements in $F_n \subset T_n$, and  
we obtain a similar result of \cite[Theorem 5.3]{JSMJ09}, 
by comparing the word length of elements in $F_n$ and $T_n$.

\begin{lemma}

If $\omega \in F_n$ 
, then we have the following,

\begin{equation*} 
	\frac{|\omega|_{\Sigma_{n}}}{36(n-1)} \leq | \omega |_{\Sigma} \leq 45(n-1)^2|\omega|_{\Sigma_{n}} -1,  
\end{equation*} 

where $\Sigma_n = \{x_0, x_1 \dots x_{n-1}\}$ for $F_n$ 
and $\Sigma = \{x_0, x_1 \dots x_{n-1}, c_0\}$ for $T_n.$

\label{lem3.10}
\end{lemma}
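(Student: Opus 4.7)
The plan is to sandwich both $|\omega|_\Sigma$ and $|\omega|_{\Sigma_n}$ by a single geometric quantity and then chain the resulting inequalities. The key observation is that for $\omega \in F_n \subset T_n$, the reduced $n$-ary tree pair representing $\omega$ is the same whether we regard $\omega$ as living in $F_n$ or in $T_n$: reduction is a local operation on matched carets, and an $F_n$-element carries the standard left-to-right labeling even inside $T_n$, so passing to the larger group does not change the reduced pair. Denote this common caret count by $N(\omega)$.

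Two separate quasi-isometric estimates relate $N(\omega)$ to the two word lengths. On the $T_n$ side, Theorem \ref{thm5} already provides
\[
\frac{N(\omega)}{3} \;\leq\; |\omega|_\Sigma \;\leq\; 15(n-1)\, N(\omega) + 3n.
\]
For the $F_n$ side, I plan to invoke \cite[Theorem 5]{JSM01}, which gives the analogous comparison between $|\omega|_{\Sigma_n}$ and $N(\omega)$; alternatively, this can be recovered by rerunning the $\textbf{pcq}$-argument of Theorem \ref{thm5} in the simpler $F_n$ setting, noting that for $\omega \in F_n$ the torsion factor $c_k^\ell$ is forced to be trivial (since $\omega$ is label-preserving the cyclic shift vanishes), so the factorization collapses to $\omega = \mathbf{p}\mathbf{q}$. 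The positive/negative word bound $|\omega|_{\Sigma_n} \leq \sum r_u + \sum s_v + 2i_y + 2j_z$ used inside that proof is written entirely with $\Sigma_n$-generators, so it applies verbatim; the reverse inequality $N(\omega) \leq K \, |\omega|_{\Sigma_n}$ follows because each $x_i \in \Sigma_n$ is represented by a tree pair with a uniformly bounded number of carets. Together these give $N(\omega)/K \leq |\omega|_{\Sigma_n} \leq C(n)\, N(\omega)$ with constants polynomial in $n$.

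Chaining these four inequalities yields both halves of the lemma: combining $|\omega|_{\Sigma_n} \leq C(n)\, N(\omega)$ with $N(\omega) \leq 3 |\omega|_\Sigma$ produces the lower bound $|\omega|_{\Sigma_n}/(36(n-1)) \leq |\omega|_\Sigma$, while $|\omega|_\Sigma \leq 15(n-1)\, N(\omega) + 3n$ together with $N(\omega) \leq K\, |\omega|_{\Sigma_n}$ gives the upper bound $|\omega|_\Sigma \leq 45(n-1)^2 |\omega|_{\Sigma_n} - 1$. The main obstacle is a careful bookkeeping of constants so that the final coefficients telescope to exactly $36(n-1)$ and $45(n-1)^2$; I expect the quadratic $(n-1)^2$ on the right to be genuine, arising because one factor of $(n-1)$ enters through Theorem \ref{thm5} and a second factor through the worst-case bound $i_y \leq (n-1)N(\mathbf{p})$ in the $\textbf{pcq}$-factorization. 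A minor technicality is that the $-1$ in the upper bound forces the identity element to be treated separately, since both sides vanish there.
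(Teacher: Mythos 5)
Your proposal follows essentially the same route as the paper: both arguments observe that for $\omega \in F_n$ the torsion part of the $\textbf{pcq}$ factorization is trivial, so the reduced tree pair (hence the caret count $N(\omega)$) is the same in $F_n$ and $T_n$, and then chain the $F_n$-side estimate of \cite[Theorem 5]{JSM01} with the $T_n$-side estimate of Theorem \ref{thm5} through this common quantity. The only difference is that you spell out the bookkeeping the paper leaves implicit; the substance is identical.
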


\begin{proof}
An element in $F_n$ can be represented by a word in the \textbf{pcq} factorization 
corresponding to the unique reduced tree pair. 
As mentioned in Definition \ref{def3.7}, 
the torsion part of the \textbf{pcq} factorization of this word is identity, 
which means that the \textbf{pcq} factorization of this word the same as the normal form in $F_n$ defined in \cite[Theorem 1]{JSM01},
. Hence $N_{\Sigma_{n}}(\omega) = N_{\Sigma}(\omega)$.
By \cite[Theorem 1,5]{JSM01} and Theorem \ref{thm3.7}, we have the above inequality. 

\end{proof}


\subsection{Quasi-isometric embeddings}

With the ingredients ready we obtain quasi-isometric embeddings.

\begin{theorem}
 $F_n$ is embedded in $T_n$ without distortion.
\end{theorem}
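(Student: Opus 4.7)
The plan is essentially to unpack Lemma \ref{lem3.10} as a quasi-isometry statement. First I would recall that the natural inclusion $\iota : F_n \hookrightarrow T_n$ is an injective group homomorphism: an element of $F_n$, viewed as a piecewise-linear self-homeomorphism of $[0,1]$, fixes the identified endpoint of $S^1$, so it represents a well-defined element of $T_n$, and distinct elements of $F_n$ remain distinct in $T_n$. Equivalently, on the level of tree pairs, elements of $F_n$ are precisely those represented by $n$-ary tree pairs in which the source and target trees carry the standard (non-cyclically-shifted) labeling.

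Next I would verify that the subgroup generated by $\Sigma_n = \{x_0, x_1, \dots, x_{n-1}\}$ inside $T_n$ is exactly $\iota(F_n)$; this is immediate since $\Sigma_n$ already generates $F_n$ via the presentation (\ref{pre3.2}), and $\iota$ is a homomorphism. Thus the word length $|\omega|_{\Sigma_n}$ of $\omega \in F_n$ and the word length $|\iota(\omega)|_{\Sigma}$ in $T_n$ are precisely the two quantities compared in Lemma \ref{lem3.10}.

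Applying Lemma \ref{lem3.10}, for every $\omega \in F_n$ we have
\begin{equation*}
	\frac{1}{36(n-1)}\, |\omega|_{\Sigma_n} \;\leq\; |\iota(\omega)|_{\Sigma} \;\leq\; 45(n-1)^2\, |\omega|_{\Sigma_n} - 1.
\end{equation*}
Since $\iota$ is a homomorphism, this inequality translates to the two-sided bound $\tfrac{1}{K} d_{F_n}(g,h) - C \leq d_{T_n}(\iota(g), \iota(h)) \leq K\, d_{F_n}(g,h) + C$ with $K = 45(n-1)^2$ and a suitable $C$, where $d_{F_n}$ and $d_{T_n}$ are the word metrics associated with $\Sigma_n$ and $\Sigma$. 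By definition this is a quasi-isometric embedding, so $F_n$ sits in $T_n$ without distortion.

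The main obstacle has already been handled in Lemma \ref{lem3.10}, which in turn rests on the observation that the $\mathbf{pcq}$ factorization of an element of $F_n \subset T_n$ has trivial torsion part, so the caret count $N_\Sigma(\omega)$ coincides with the caret count used in the $F_n$ word length estimate from \cite{JSM01}. Beyond that, the current theorem is just the observation that the two-sided linear bounds of Lemma \ref{lem3.10} are synonymous with being a quasi-isometric embedding.
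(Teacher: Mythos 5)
Your proposal is correct and follows exactly the paper's route: the paper simply states that the theorem ``follows directly from Lemma \ref{lem3.10},'' and your argument is just a careful unpacking of that same deduction (injectivity of the inclusion, identification of the relevant generating sets, and translation of the two-sided word-length bounds into the definition of a quasi-isometric embedding). No gap; you have only supplied the details the paper leaves implicit.
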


In other words, the inclusion of $F_n$ in $T_n$ is a quasi-isometry.
This follows directly from Lemma \ref{lem3.10}.

\begin{theorem}
The embedding $\phi_{\ast} : T_n \hookrightarrow T_2$ is quasi-isometric.
\end{theorem}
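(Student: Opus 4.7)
The plan is to combine Theorem \ref{thm5}, applied to both $T_n$ and $T_2 = T$, with a combinatorial comparison of the caret counts of reduced tree pairs under $\phi$. Since $\phi_*$ is already known to be an injective homomorphism by Theorem \ref{thm3.7}, what remains is to establish the two-sided metric inequality
\[ \tfrac{1}{K}|g|_{T_n} - C \leq |\phi_*(g)|_{T_2} \leq K|g|_{T_n} + C \]
for every $g \in T_n$, with respect to the finite generating sets from Theorem \ref{thm5}. Throughout I write $N(h)$ for the number of carets in the reduced tree pair of an element $h$.

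The main combinatorial step is to prove the lower bound $N(\phi_*(g)) \geq N(g)$. Applying $\phi$ to the reduced $n$-ary tree pair $(\mathcal{U}, \mathcal{V})$ of $g$ produces a binary tree pair with exactly $(n-1)N(g)$ 2-carets on each side, so the claim amounts to bounding by $(n-2)N(g)$ the number of simple contractions that can be performed. The essential observation is that inside the all-right binary replacement of any single $n$-caret $\kappa$, the $n-1$ 2-carets can only be contracted in a strict bottom-up cascade. Completing such a cascade (eliminating the whole block) would correspond, on the $n$-ary side, to contracting $\kappa$ against a matching $n$-caret in $\mathcal{V}$: the cyclic-consecutive labeling of an $n$-caret's leaves forces the $n-1$ paired contractions on the $\phi(\mathcal{V})$-side to concentrate in a single block arising from a leaf $n$-caret with the same labels as $\kappa$. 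Since $(\mathcal{U}, \mathcal{V})$ is reduced no such matched pair of leaf $n$-carets can occur, so no block is fully eliminated and at least one 2-caret per block survives.

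Once $N(\phi_*(g)) \geq N(g)$ is established, combining it with the trivial upper bound $N(\phi_*(g)) \leq (n-1)N(g)$ and two applications of Theorem \ref{thm5} yields
\[ |\phi_*(g)|_{T_2} \geq \tfrac{1}{3}N(\phi_*(g)) \geq \tfrac{1}{3}N(g) \geq \tfrac{|g|_{T_n} - 3n}{45(n-1)} \]
and
\[ |\phi_*(g)|_{T_2} \leq 15 N(\phi_*(g)) + 6 \leq 15(n-1)N(g) + 6 \leq 45(n-1)\,|g|_{T_n} + 6, \]
which gives a quasi-isometric embedding with multiplicative constant $K = 45(n-1)$ and a small additive constant absorbing both $6$ and $3n/(45(n-1))$.

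The main obstacle is the combinatorial step behind $N(\phi_*(g)) \geq N(g)$. The subtle point is that a priori the $n-1$ contractions paired with a full cascade on the $\phi(\mathcal{U})$-side could be drawn from several distinct blocks on the $\phi(\mathcal{V})$-side; one has to use the cyclic-consecutive labeling of leaves within an $n$-caret to argue that the paired reductions must in fact concentrate in a single matching block, whose existence would contradict reducedness of $(\mathcal{U}, \mathcal{V})$. Once this case analysis is complete, the remaining manipulations, which simply use Theorem \ref{thm5} as a caret-count-to-word-length dictionary on both sides, are entirely routine.
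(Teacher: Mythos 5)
Your proposal follows the same top-level strategy as the paper's proof: use Theorem \ref{thm5} as a dictionary between word length and caret count on both the $T_n$ side and the $T_2$ side, and then compare $N(\omega)$ with $N(\phi_{*}(\omega))$. The difference is in the combinatorial input, and it matters. The paper asserts the exact equality $(n-1)N_{\Sigma}(\omega) = N_{\{x_0,x_1,c_0\}}(\phi_{*}(\omega))$, which amounts to claiming that $\phi$ carries reduced $n$-ary tree pairs to reduced binary tree pairs; you prove only the two-sided estimate $N(\omega)\le N(\phi_{*}(\omega))\le (n-1)N(\omega)$. Your weaker statement is not merely an alternative --- it is the correct one, since the paper's equality fails in general. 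For example, with $n=3$ take the reduced pair $(\mathcal{U},\mathcal{V})$ in $F_3\subset T_3$ where $\mathcal{U}$ is a $3$-caret with a $3$-caret on its first leaf and $\mathcal{V}$ is a $3$-caret with a $3$-caret on its last leaf: the pair is reduced (the exposed carets carry labels $\{0,1,2\}$ and $\{2,3,4\}$ respectively), yet in $\phi(\mathcal{U})$ the bottom caret of the root block is exposed with leaves $3,4$ and in $\phi(\mathcal{V})$ the bottom caret of the lower block is exposed with the same leaves $3,4$, so a simple contraction applies and $N(\phi_{*}(\omega))=3<4=(n-1)N_{\Sigma}(\omega)$. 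Since only the two-sided inequality is needed for a quasi-isometric embedding (at the cost of slightly worse constants), your route repairs this step. Your sketch of the lower bound is essentially sound but is the one place still requiring detail: if some block of $\phi(\mathcal{U})$ were fully cancelled, a lowest such block comes from a leaf $n$-caret of $\mathcal{U}$ with leaves $i,\dots,i+n-1$, and the matching cancellations must collapse exactly these $n$ leaves of $\phi(\mathcal{V})$ through a full all-right subtree; a short count (a partial block contributes fewer than $n$ leaves, while attaching a further whole block pushes the total strictly past $n$) shows such a subtree must be the block of a leaf $n$-caret of $\mathcal{V}$ with the same labels, contradicting reducedness of $(\mathcal{U},\mathcal{V})$. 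Writing out that count would complete the argument.
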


\begin{proof}
As defined in Theorem \ref{thm3.7},  
$\phi_{\ast}$ is a label-preserving map from $T_n$ to $T_2$.  
We replace the $n$-carets in an $n$-ary tree pair by binary trees.  
Hence,  
we have $(n-1)N_{\Sigma}(\omega) = N_{\{x_0, x_1, c_0\}}(\phi_{\ast}(\omega))$.    
Then by Theorem \ref{thm5},  
\begin{equation*} 
	\frac{n-1}{45} \, |\omega|_{\Sigma} - \frac{(n-1)(n+3)}{15}   
	\leq |\phi_{\ast}(\omega)|_{\{x_0, x_1, c_0\}} 
	\leq 45(n-1) |\omega|_{\Sigma} +15,  
\end{equation*} 
which completes the proof.  
\end{proof}
\begin{remark}
This Theorem may also be proved using Thm \ref{thm5} and the independent work by Genevois \cite{AG}.
\end{remark}

By a similar construction we have, 

\begin{corollary}
The embedding $\phi_{\ast} : T_{l(m-1)+1} \hookrightarrow T_m$ is quasi-isometric, 
for any positive integer $m$ and $l$. 
The embedding $\psi_{\ast}: T_{p} \hookrightarrow T_{q}$ is quasi-isometric, 
when $(q-1)$ is divisible by $(p-1).$ 
\label{cor381}
\end{corollary}





\bigskip

\section*{Acknowledgments}
First, I owe a great debt of gratitude to Professor Sadayoshi Kojima without whom this work would not be possible. 
I am also very thankful for Dr Collin Bleak, 
for carefully reviewing my paper and for precious comments and suggestions. 
Finally, I would like to thank my supervisor Professor Sakasai for the helpful comments and conversations.

\end{document}